\newtheorem{theorem}{Theorem}[section]
\newtheorem{lemma}[theorem]{Lemma}
\newtheorem{corollary}[theorem]{Corollary}
\theoremstyle{definition}
\newtheorem{definition}[theorem]{Definition}
\newtheorem{remark}[theorem]{Remark}
\newtheorem{example}[theorem]{Example}
\numberwithin{equation}{section}
\begin{document}

\title{Degenerate Monge-Type Hypersurfaces}

\author{David N. Pham}
\address{School of Mathematical Sciences, Rochester Institute of Technology, Rochester, NY 14623}
\email{dpham90@gmail.com}

\subjclass[2010]{53C50, 53B30}

\keywords{lightlike geometry, Monge hypersurface, degenerate metric}



\begin{abstract}
In this note, we extend the notion of a Monge hypersurface from its roots in semi-Euclidean space to more general spaces.   For the degenerate case, the geometry of these structures is studied using the Bejancu-Duggal method of screen distributions.
\end{abstract}

\maketitle
\section{Introduction}
\noindent Semi-Riemannian geometry is a well established branch of mathematics.  By comparison, the theory of lightlike manifolds is still relatively new and less developed.  If $(\overline{M},\overline{g})$ is a semi-Riemannian manifold and $(M,g)$ is a semi-Riemannian submanifold of $(\overline{M},\overline{g})$, the key to relating the geometry on $M$ with that of $\overline{M}$ is the fact that the tangent bundle of $\overline{M}$ splits as
\begin{equation}
\nonumber
T\overline{M}|_{M}= TM\oplus NM,
\end{equation}
where $TM$ and $NM$ are respectively the tangent bundle and normal bundle of $M$.  In lightlike geometry, this decomposition is no longer possible since a degenerate metric produces a non-empty intersection between $TM$ and $NM$.  

Lightlike submanifolds arise naturally in semi-Riemannian geometry as well as physics.  In semi-Riemannian geometry, the metric tensor is indefinite.  Consequently, there is no assurance that the induced metric on any given submanifold will remain non-degenerate. In general relativity, lightlike submanifolds model various types of horizons \cite{AK} \cite{Sul1} \cite{Sul2}.       

To deal with the problems posed by lightlike submanifolds\footnote{For an alternate approach to lightlike geometry, see \cite{Ku} \cite{Ku1}.}, Bejancu and Duggal introduced the notion of \textit{screen distributions} in \cite{DS1}, which provides a direct sum decomposition of $T\overline{M}$ with certain nice properties.  With a choice of screen distribution, one can induce geometric objects on a lightlike submanifold in a manner which is analogous to what is done in the classical theory of submanifolds.

For any developing theory of mathematics, examples clearly play an important role in testing ideas, developing concepts, and shaping the overall theory.  For the field of lightlike geometry, a number of instructive examples have come in the form of a Monge hypersurface.   In addition to being a source of examples for the field, Monge hypersurfaces are also interesting geometric objects in their own right \cite{Cal} \cite{Che} \cite{DS} \cite{DS1}.   As defined in \cite{DS1} \cite{DS}, a Monge hypersurface lives in semi-Euclidean space, which places limitations on their geometry.  In this note, we extend the notion of a Monge hypersurface from its roots in semi-Euclidean space to more general spaces.  These new structures, which we call \textit{Monge-type hypersurfaces}, allow for more general geometries, and could, in time, be a source of new and interesting examples of lightlike hypersurfaces.  

The rest of the paper is organized as follows.  In section 2, we review the method of screen distributions introduced by Bejancu and Duggal in \cite{DS1}.  In section 3, we develop the basic theory of Monge-type hypersurfaces as it pertains to lightlike geometry.  Lastly, in section 4, we conclude the paper with some basic examples.     

\section{Preliminaries}
\noindent In this section, we review the Bejancu-Duggal approach to lightlike geometry \cite{DS} \cite{DS1}.  We begin with the following definition:
\begin{definition}
\label{LightlikeDefinition}
An $r$-lightlike (or degenerate) manifold $(M,g)$ is a smooth manifold $M$ with a degenerate metric $g$, which satisfies the following conditions
\begin{itemize}
\item[1.] the radical space
\begin{equation}
\mbox{Rad}~T_pM:=\{u\in T_pM~|~g(u,v)=0~\forall v\in T_p M\}
\end{equation}
has dimension $r>0$ for all $p\in M$
\item[2.] the distribution defined via $p\mapsto \mbox{Rad}~T_pM$ is smooth.
\end{itemize}
\end{definition}
\begin{definition}
Let $(\overline{M},\overline{g})$ be a semi-Riemannian manifold.  A submanifold $M$ of $\overline{M}$ is a \textit{lightlike submanifold} if $(M,g)$ is a lightlike manifold, where $g$ is the induced metric on $M$.
\end{definition}
\noindent Hence, the fibers of $\mbox{Rad}~TM$ are
\begin{equation}
\label{lightlike_submanifold1}
\mbox{Rad} ~T_p M= T_p M\cap T_pM^\perp
\end{equation}
where
\begin{equation}
T_pM^\perp:=\{u\in T_p\overline{M}~|~~\overline{g}(u,v)=0~\forall v\in T_p M\}.
\end{equation}
Since $\dim {Rad}~T_pM>0$, $T\overline{M}|_{M}$ does not decompose as the direct sum of $TM$ and $TM^\perp$.  Consequently, the classical Gauss-Weingarten formulas breakdown for lightlike submanifolds.  As a way to remedy this problem, Bejancu and Duggal \cite{DS1}  introduced the notion of \textit{screen distributions}.   We now review this approach for the special case when $(M,g)$ is a lightlike hypersurface, that is, a lightlike submanifold of codimension 1 in $(\overline{M},\overline{g})$. Notice that this implies that $TM^\perp$ is a line bundle and $TM^\perp\subset TM$.  By (\ref{lightlike_submanifold1}), we have 
\begin{equation}
\mbox{Rad}~TM=TM^\perp.
\end{equation}
A \textit{screen distribution} $S(TM)$ is any smooth vector bundle for which
\begin{equation}
\label{screen_decomp}
TM=S(TM)\oplus TM^\perp.
\end{equation}
Notice that (\ref{screen_decomp}) implies that $\overline{g}$ is non-degenerate on $S(TM)$.  The fundamental result of \cite{DS1} for the case of lightlike hypersurfaces can be stated as follows:
\begin{theorem}
\label{ExistenceTheorem}
Let $(\overline{M},\overline{g})$ be a semi-Riemannian manifold and let $(M,g)$ be a lightlike hypersurface of $\overline{M}$.  For each screen distribution $S(TM)$, there exists a unique line bundle $\mbox{tr}(TM)$ which satisfies the following conditions:
\begin{itemize}
\item[(i)] $T\overline{M}|_{M}=TM\oplus \mbox{tr}(TM)$ 
\item[(ii)] given a non-vanishing local section $\xi$ of $TM^\perp$ which is defined on a neighborhood $U$ of $p\in M$, there exists a unique, non-vanishing local section $N_\xi$ of $\mbox{tr}(TM)$ defined on a neighborhood $U'\subset U$ of $p$ such that
\begin{itemize}
\item[(a)] $\overline{g}(\xi,N_\xi)=1$
\item[(b)] $\overline{g}(N_\xi,N_\xi)=0$
\item[(c)] $\overline{g}(W,N_\xi)=0$ for all $W\in \Gamma(S(TM)|_{U'})$.
\end{itemize}
\end{itemize}
\end{theorem}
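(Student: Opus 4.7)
The plan is to build $\mbox{tr}(TM)$ as the unique null line subbundle of $S(TM)^\perp$ (the $\overline{g}$-orthogonal complement of $S(TM)$ inside $T\overline{M}|_M$) that is complementary to $TM^\perp$. The excerpt already observes that $\overline{g}|_{S(TM)}$ is non-degenerate; this lets me split $T\overline{M}|_M = S(TM) \oplus S(TM)^\perp$ and makes $S(TM)^\perp$ a rank-$2$ non-degenerate subbundle. Since $S(TM) \subset TM$, one has $TM^\perp \subset S(TM)^\perp$. Because $TM^\perp$ is null, each fiber of $S(TM)^\perp$ is a $(1,1)$-signature plane, which carries exactly two null lines. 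This is the geometric picture that organizes the whole argument: $\mbox{tr}(TM)$ will be the \emph{other} null line.

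To produce $\mbox{tr}(TM)$ concretely, I would work locally. Given a non-vanishing $\xi \in \Gamma(TM^\perp|_U)$ and a local section $V$ of $S(TM)^\perp$ that is transverse to $TM^\perp$ at a chosen point (such a $V$ exists on a neighborhood $U' \subset U$ because $S(TM)^\perp$ has rank $2$), non-degeneracy of $\overline{g}$ on $S(TM)^\perp$ forces $\overline{g}(V,\xi) \ne 0$ on $U'$; otherwise $\xi$ would be $\overline{g}$-orthogonal to the entire plane spanned by $\{V,\xi\}$. I would then set
\[
N_\xi \;:=\; \frac{1}{\overline{g}(V,\xi)}\!\left(V \;-\; \frac{\overline{g}(V,V)}{2\,\overline{g}(V,\xi)}\,\xi\right),
\]
which is manifestly smooth and which satisfies (a), (b), (c) by short direct computations exploiting $\overline{g}(\xi,\xi)=0$ and $V \in S(TM)^\perp$. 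I would then define $\mbox{tr}(TM)|_{U'}$ as the line spanned by $N_\xi$.

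The key verifications at this point are that the construction is independent of the auxiliary data: replacing $V$ by another transverse section $V' = \alpha V + \beta \xi$ yields the same $N_\xi$, and replacing $\xi$ by $f\xi$ yields $N_{f\xi} = f^{-1} N_\xi$. These identities ensure that $\mbox{tr}(TM)$ is well-defined as a smooth global line subbundle. Condition (i) is then immediate: $\overline{g}(\xi, N_\xi) = 1$ forces $N_\xi \notin TM$, so $TM \cap \mbox{tr}(TM) = 0$, and a rank count completes the decomposition. For uniqueness of $N_\xi$ in (ii), condition (c) places any candidate inside $S(TM)^\perp$, where it can be written as $aV + b\xi$; the scalar conditions (a) and (b) then determine $a$ and $b$ uniquely. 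Uniqueness of the bundle $\mbox{tr}(TM)$ itself follows because any competing line bundle satisfying (i) and (ii) has, by (ii), local generators equal to our $N_\xi$.

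The expected main obstacle is the independence-of-$V$ step, since that is what upgrades the fiberwise description (``the other null line in the Lorentz plane'') into an honest smooth vector bundle with globally consistent distinguished sections. Once that computation is settled, all remaining claims reduce to linear-algebraic bookkeeping inside each Lorentz plane $S(TM)^\perp_p$.
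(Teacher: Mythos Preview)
Your proposal is correct and follows essentially the same construction as the paper: the paper also splits $T\overline{M}|_M = S(TM)\oplus S(TM)^\perp$, picks a local section $V$ of a complement to $TM^\perp$ inside $S(TM)^\perp$, observes $\overline{g}(\xi,V)\neq 0$, and defines $N_\xi$ by the identical formula you wrote down, then asserts independence of the choice of $V$ (equivalently, of the auxiliary complement $F$) and of $\xi$. Your write-up is in fact more complete than the paper's sketch, since you actually spell out the independence checks and the uniqueness arguments that the paper leaves as ``it can be shown.''
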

\noindent The line bundle $\mbox{tr}(TM)$ appearing in Theorem \ref{ExistenceTheorem} is called the \textit{lightlike transversal bundle}.  Explicitly, $\mbox{tr}(TM)$ is constructed as follows.  Let $F$ be any vector bundle for which
\begin{equation}
F\oplus TM^\perp = S(TM)^\perp.
\end{equation}
Notice that $F$ is necessarily a line bundle.  For all $p\in M$, choose a non-vanishing local section $\xi$ of $TM^\perp$ and a non-vanishing local section $V$ of $F$ which are both defined on a neighborhood $U$ of $p$.  Since $\overline{g}$ is non-degenerate on $S(TM)$, it follows that
\begin{equation}
T\overline{M}|_M=S(TM)\oplus S(TM)^\perp.
\end{equation} 
This implies 
\begin{equation}
\nonumber
\overline{g}(\xi,V)\neq 0
\end{equation}
 on $U$.   The local section $N_\xi$ in Theorem \ref{ExistenceTheorem} is then given by
\begin{equation}
\label{transversal1}
N_\xi:=\frac{1}{\overline{g}(\xi,V)}\left(V-\frac{\overline{g}(V,V)}{2\overline{g}(\xi,V)}\xi\right).
\end{equation}
It can be shown that the 1-dimensional space spanned by $N_\xi$ is independent of the choice of $\xi$ or the bundle $F$.  Hence, $N_\xi$ determines a rank 1 distribution, which, in turn, defines the line bundle $\mbox{tr}(TM)$.

Using the decomposition of Theorem \ref{ExistenceTheorem}, one obtains a modified version of the Gauss-Weingarten formulas for lightlike hypersurfaces:
\begin{align}
\label{lightlike_connection}
\overline{\nabla}_X Y&= \nabla_X Y+h(X,Y)\\
\label{lightlike_second_form}
\overline{\nabla}_X V&=-A_V X+\nabla^t_X V
\end{align}
for all $X,Y\in \Gamma(TM)$ and $V\in \Gamma(\mbox{tr}(TM))$,  where
\begin{itemize}
\item[(i)] $\overline{\nabla}$ is the Levi-Civita connection on $(\overline{M},\overline{g})$
\item[(ii)] $\nabla_X Y$ and  $A_V X$ belong to $\Gamma(TM)$
\item[(iii)] $h(X,Y)$ and $\nabla^t_X V$ belong to $\Gamma(\mbox{tr}(TM))$.
\end{itemize}
It follows from (\ref{lightlike_connection}) that $\nabla$ is a connection on $M$ and $h$ is a $\Gamma (\mbox{tr}(TM))$-valued $C^\infty(M)$-bilinear form.  In addition, a direct verification shows that $\nabla$ is torsion-free and $h$ is symmetric.    In (\ref{lightlike_second_form}), $A_V$ is a $C^\infty(M)$-linear operator on $\Gamma(TM)$ and $\nabla^t$ is a connection on $\mbox{tr}(TM)$.  As in classical submanifold theory, $h$ is called the \textit{second fundamental tensor}, and $A_V$ is the \textit{shape operator} of $M$ in $\overline{M}$.   Lastly, the second fundamental form $B_\xi$ associated with a local section $\xi$ of $TM^\perp$ is defined by
\begin{equation}
\label{secondFundamentalForm}
B_\xi(X,Y):=\overline{g}(\overline{\nabla}_X Y,\xi).
\end{equation}
It follows from this definition that 
\begin{equation}
h(X,Y)=B_\xi(X,Y)N_\xi.
\end{equation}

The primary shortcoming of this approach is that some (not all) of the induced geometric objects on $(M,g)$ are dependent on the choice of $S(TM)$.  Hence, the search for spaces with canonical or unique screen distributions has been an area of research  for this approach \cite{Dug0} \cite{Dug1} \cite{Dug2} \cite{Bej} \cite{BFL}.  Fortunately, this framework does contain objects which are independent of the choice of screen distribution.  Consequently, these objects provide the aforementioned theory with well defined invariants.  We conclude this section by recalling some of these invariants:
\begin{definition}
Let $(M,g)$ be a lightlike hypersurface with screen distribution $S(TM)$ and let $B_\xi$ denote the second fundamental form associated with a local section $\xi$ of $TM^\perp$.  Then $(M,g)$ is
\begin{itemize}
\item[(i)] totally geodesic if $B_\xi\equiv 0$
\item[(ii)] totally umbilical if $B_\xi=\rho g$ for some smooth function $\rho$
\item[(iii)] minimal if 
\begin{equation}
\nonumber
\sum_{i=1}^n \varepsilon_i B_\xi(E_i,E_i)=0,
\end{equation}
where $E_i,~i=1,\dots, n$ is any orthonormal local frame of $S(TM)$, and $\varepsilon_i:= g(E_i,E_i)\in \{-1,1\}$.
\end{itemize}
\end{definition}
\begin{remark}
Since $B_\xi$ is given by (\ref{secondFundamentalForm}), statements (i) and (ii) of the above definition are clearly independent of the choice of screen distribution.  Although not quite as apparent, statement (iii) of the above definition is independent of both the choice of orthonormal frame and the choice of screen distribution.  In addition, notice that if $\xi'$ is another local section of $TM^\perp$ (defined on the same open set as $\xi$), then
\begin{equation}
B_{\xi'} = \lambda B_\xi,
\end{equation}
for some smooth non-vanishing function $\lambda$.  Hence, (i)-(iii) are also independent of the choice of local section $\xi$.
\end{remark}

\section{Monge-Type Hypersurfaces}
\noindent We begin with the following definition:
\begin{definition}
\label{MongeType}
A \textit{Monge-type hypersurface} $(M,g)$ and its ambient space $(\overline{M},\overline{g})$ are generated by a triple $(\widehat{M},\widehat{g},F)$, where
\begin{itemize}
\item[(i)] $(\widehat{M},\widehat{g})$ is a semi-Riemannian manifold, and
\item[(ii)] $F: \widehat{M}\rightarrow \mathbb{R}$ is a smooth function.
\end{itemize}
The ambient space $(\overline{M},\overline{g})$ is the semi-Riemannian manifold defined by
\begin{align}
\nonumber
\overline{M}&:=\mathbb{R}\times \widehat{M}\\
\nonumber
\overline{g}&:=-dx^0\otimes dx^0+\pi^\ast \widehat{g},
\end{align}
where $\pi: \overline{M}\rightarrow \widehat{M}$ is the natural projection map and $x^0$ is the natural coordinate associated with the $\mathbb{R}$-component of $\overline{M}$.  $(M,g)$ is the hypersuface in $(\overline{M},\overline{g})$ defined by
\begin{align}
\nonumber
M&:=\{(t,p)\in \overline{M}~|~t=F(p)\}\\
\nonumber
g&:=i^\ast \overline{g},
\end{align}
 where $i: M\hookrightarrow \overline{M}$ is the inclusion map.  The triple $(\widehat{M},\widehat{g},F)$ is a \textit{Monge-type generator}.
 \end{definition}
\begin{remark}
Let $\mathbb{R}^{n+1}_{k-1}$ denote $(n+1)$-dimensional semi-Euclidean space with index $k-1$, that is, the space $\mathbb{R}^{n+1}$ with metric
\begin{equation}
\nonumber
\eta := -\sum_{i=1}^{k-1}dx^i\otimes dx^i+\sum_{j=k}^{n+1}dx^j\otimes dx^j.
\end{equation}
If $\widehat{M}$ in Definition \ref{MongeType} is an open submanifold of $\mathbb{R}^{n+1}_{k-1}$ (with the induced metric), then $(M,g)$ coincides with the definition given in \cite{DS1} for a Monge hypersurface.  
\end{remark}
\begin{definition}
A Monge-type generator is degenerate if its associated Monge-type hypersurface is degenerate.  
\end{definition}
\begin{theorem}
\label{MongeTypeLightlike}
Let $(\widehat{M},\widehat{g},F)$ be a Monge-type generator.  Then the associated Monge-type hypersurface is lightlike iff
$\widehat{g}(\widehat{\xi},\widehat{\xi})=1$, where $\widehat{\xi}$ is the gradient of $F$ with respect to $\widehat{g}$.
\end{theorem}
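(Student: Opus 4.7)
The plan is to compute the normal direction to $M$ explicitly and then observe that, since $M$ is a hypersurface in a semi-Riemannian manifold, degeneracy of the induced metric is equivalent to the one-dimensional normal line being null.

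First, I will parametrize $M$ by the embedding $\varphi:\widehat{M}\to \overline{M}$, $\varphi(p)=(F(p),p)$. Under the identification $T_{(t,p)}\overline{M}\cong \mathbb{R}\oplus T_p\widehat{M}$, the differential $d\varphi_p$ sends $X\in T_p\widehat{M}$ to $(dF_p(X),X)$, so
\begin{equation}
\nonumber
T_{\varphi(p)}M=\{(dF_p(X),X)\,:\,X\in T_p\widehat{M}\}.
\end{equation}
A vector $(a,Y)\in \mathbb{R}\oplus T_p\widehat{M}$ lies in $T_{\varphi(p)}M^{\perp}$ iff, for every $X\in T_p\widehat{M}$,
\begin{equation}
\nonumber
\overline{g}\bigl((a,Y),(dF_p(X),X)\bigr)=-a\,dF_p(X)+\widehat{g}(Y,X)=0,
\end{equation}
i.e.\ iff $\widehat{g}(Y,X)=a\,\widehat{g}(\widehat{\xi},X)$ for all $X$. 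Nondegeneracy of $\widehat{g}$ forces $Y=a\widehat{\xi}$, so $T_{\varphi(p)}M^{\perp}$ is the line spanned by $\xi:=(1,\widehat{\xi})$.

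Next, for any hypersurface in a semi-Riemannian manifold, $(M,g)$ is lightlike precisely when its one-dimensional normal bundle consists of null vectors (equivalently, $TM^\perp\subset TM$). A direct computation gives
\begin{equation}
\nonumber
\overline{g}(\xi,\xi)=-1+\widehat{g}(\widehat{\xi},\widehat{\xi}),
\end{equation}
so $\xi$ is null iff $\widehat{g}(\widehat{\xi},\widehat{\xi})=1$, which is the claimed equivalence.

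There is no real obstacle; the only thing to be careful about is correctly using the minus sign from $-dx^0\otimes dx^0$ when computing both $\overline{g}$ on tangent pairs $(dF_p(X),X)$ and on the candidate normal $(1,\widehat{\xi})$, and the observation that $\widehat{\xi}$ is defined by $dF=\widehat{g}(\widehat{\xi},\cdot)$ so that the normal equation $\widehat{g}(Y,\cdot)=a\,dF$ pins $Y$ down uniquely.
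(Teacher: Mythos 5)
Your proposal is correct and follows essentially the same route as the paper: both identify the normal line to $M$ as spanned by $\xi=\partial_{x^0}+\widehat{\xi}$ (the paper via $\mbox{grad}_{\overline{g}}(F\circ\pi-x^0)$, you by solving the orthogonality equations from the graph parametrization) and conclude from $\overline{g}(\xi,\xi)=-1+\widehat{g}(\widehat{\xi},\widehat{\xi})$ that degeneracy is equivalent to $\widehat{g}(\widehat{\xi},\widehat{\xi})=1$.
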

\begin{proof}
Let $(\overline{M},\overline{g})$ and $(M,g)$ be defined as in Definition \ref{MongeType}.  Set
\begin{align}
\label{LevelSetFunctionM}
G&: = F\circ \pi-x^0\\
\label{NormalVectorM}
\xi&:=\mbox{grad}_{\overline{g}}~G,
\end{align}
where $\pi: \overline{M}\rightarrow \widehat{M}$ is the projection map and $\mbox{grad}_{\overline{g}}~G$ is the gradient of $G$ with respect to $\overline{g}$. Since $M=G^{-1}(0)$, it follows that $\xi$ is normal to $M$.  Hence, $M$ is lightlike iff
\begin{equation}
\overline{g}(\xi,\xi)=0.
\end{equation}
Let $\widehat{\xi}_L$ denote the unique lift of $\widehat{\xi}$ to $\overline{M}$.  Then 
\begin{equation}
\xi=\frac{\partial}{\partial x^0}+\widehat{\xi}_L.
\end{equation}
Theorem \ref{MongeTypeLightlike} then follows from the fact that
\begin{align}
\overline{g}(\xi,\xi)=-1+\overline{g}(\widehat{\xi}_L,\widehat{\xi}_L)=-1+\widehat{g}(\widehat{\xi},\widehat{\xi}).
\end{align}
\end{proof}
\begin{remark}
\label{VectorFieldLift}
Let $(\widehat{M},\widehat{g},F)$ be a Monge-type generator and $(\overline{M},\overline{g})$ its associated ambient space.     If $X$ is any vector field on $\widehat{M}$, we will denote its unique lift to $\overline{M}$ by $X_L$, that is, if 
\begin{align}
\nonumber
\pi_1:& \overline{M}\rightarrow \mathbb{R}\\
\nonumber
\pi_2:&\overline{M}\rightarrow \widehat{M}
\end{align}
are the natural projection maps, then
\begin{align}
\nonumber
\pi_{1\ast} X_L = 0\\
\nonumber
\pi_{2\ast} X_L = X.
\end{align}
To simplify notation in some places, we will not distinguish between $X_L$ and $X$. 
\end{remark}
\noindent From the proof of Theorem \ref{MongeTypeLightlike}, we have the following:
\begin{corollary}
\label{CorNormalVecM}
Let $(\widehat{M},\widehat{g},F)$ be a Monge-type generator and let $(\overline{M},\overline{g})$ and $(M,g)$ be defined as in Definition \ref{MongeType}.    Then
\begin{equation}
\nonumber
\xi:=\frac{\partial}{\partial x^0}+\widehat{\xi}_L,
\end{equation}
is normal to $M$, where $\widehat{\xi}:=\mbox{grad}_{\widehat{g}}F$.
\end{corollary}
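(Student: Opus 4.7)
The plan is to observe that this corollary is essentially an extraction of an intermediate step from the proof of Theorem \ref{MongeTypeLightlike}. Define $G := F\circ\pi - x^0$ as in (\ref{LevelSetFunctionM}). Since $M = G^{-1}(0)$, and since $dG = \pi^\ast dF - dx^0$ always has a nontrivial $-dx^0$ component (so $0$ is a regular value), the gradient $\text{grad}_{\overline{g}} G$ is nowhere zero and is normal to $M$ at every point. Thus the entire task reduces to verifying that $\xi = \text{grad}_{\overline{g}} G$.

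The key computation is to exploit the product structure $\overline{g} = -dx^0\otimes dx^0 + \pi^\ast \widehat{g}$. I would check the defining identity $\overline{g}(\xi, v) = dG(v)$ for an arbitrary $v \in T\overline{M}$. Using Remark \ref{VectorFieldLift}, any such $v$ can be written as $v = a\frac{\partial}{\partial x^0} + w_L$ with $a \in \mathbb{R}$ and $w$ a tangent vector to $\widehat{M}$. A direct substitution then gives
\begin{equation}
\nonumber
\overline{g}(\xi, v) = \overline{g}\bigl(\tfrac{\partial}{\partial x^0} + \widehat{\xi}_L,\; a\tfrac{\partial}{\partial x^0} + w_L\bigr) = -a + \widehat{g}(\widehat{\xi}, w),
\end{equation}
while on the other side, $dG(v) = d(F\circ\pi)(v) - dx^0(v) = dF(w) - a$. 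Since $\widehat{\xi} = \text{grad}_{\widehat{g}} F$ satisfies $\widehat{g}(\widehat{\xi}, w) = dF(w)$ by definition, the two sides agree, so $\xi = \text{grad}_{\overline{g}} G$ and hence $\xi$ is normal to $M$.

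There is essentially no obstacle here, since the verification is a direct unwinding of definitions and, moreover, the relation $\xi = \frac{\partial}{\partial x^0} + \widehat{\xi}_L$ was already stated and used inside the proof of Theorem \ref{MongeTypeLightlike}. The only points requiring care are the sign conventions coming from the $-dx^0\otimes dx^0$ term of $\overline{g}$ and the clean bookkeeping between $\widehat{\xi}$ on $\widehat{M}$ and its horizontal lift $\widehat{\xi}_L$ on $\overline{M}$.
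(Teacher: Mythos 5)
Your proposal is correct and follows essentially the same route as the paper: the corollary is exactly the intermediate step in the proof of Theorem \ref{MongeTypeLightlike}, where $G:=F\circ\pi-x^0$, $M=G^{-1}(0)$, and $\xi=\mbox{grad}_{\overline{g}}\,G=\frac{\partial}{\partial x^0}+\widehat{\xi}_L$ is normal to $M$. Your explicit verification that $\overline{g}(\xi,v)=dG(v)$, with the sign from the $-dx^0\otimes dx^0$ term handled correctly, just makes explicit what the paper leaves implicit.
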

\begin{corollary}
Let $(\widehat{M},\widehat{g},F)$ be a Monge-type generator.  If the associated Monge-type hypersurface is degenerate in the sense of Definition \ref{LightlikeDefinition}, then $\widehat{M}$ cannot be compact.
\end{corollary}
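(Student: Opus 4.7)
The plan is to apply Theorem \ref{MongeTypeLightlike} to convert the degeneracy hypothesis into a statement about the gradient of $F$, and then observe that a compact domain would force $F$ to have a critical point, contradicting the non-vanishing of its gradient.

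First I would invoke Theorem \ref{MongeTypeLightlike} to conclude that, under the degeneracy hypothesis, $\widehat{g}(\widehat{\xi},\widehat{\xi}) = 1$ at every point of $\widehat{M}$, where $\widehat{\xi} = \mathrm{grad}_{\widehat{g}} F$. In particular, $\widehat{\xi}$ is nowhere vanishing on $\widehat{M}$.

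Next I would note that, because $\widehat{g}$ is a semi-Riemannian (hence non-degenerate) metric, the gradient $\widehat{\xi}$ vanishes at a point $p \in \widehat{M}$ if and only if $dF_p = 0$. The previous step therefore shows that $F \colon \widehat{M} \to \mathbb{R}$ has no critical points.

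Finally, suppose for contradiction that $\widehat{M}$ is compact. Then the continuous function $F$ attains a maximum (and a minimum) on $\widehat{M}$, and any such extremum is necessarily a critical point of $F$. This contradicts the conclusion of the previous step, so $\widehat{M}$ cannot be compact. The argument involves no real obstacle; the content of the corollary is essentially the observation that the condition $\widehat{g}(\widehat{\xi},\widehat{\xi}) = 1$ rules out critical points of $F$, combined with the standard extreme-value argument on a compact manifold.
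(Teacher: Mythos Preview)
Your proof is correct and follows essentially the same approach as the paper: both argue that Theorem \ref{MongeTypeLightlike} forces $\widehat{\xi}$ to be nowhere vanishing, while compactness of $\widehat{M}$ would force $F$ to have a critical point. Your version is slightly more explicit in linking the vanishing of the gradient to the vanishing of $dF$ via non-degeneracy of $\widehat{g}$, but the argument is the same.
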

\begin{proof}
Let $\widehat{\xi}:=\mbox{grad}_{\widehat{g}} F$.  If $\widehat{M}$ is compact, then $F$ must have a critical point somewhere on $\widehat{M}$.  Hence, $\widehat{\xi}$ must vanish at some point.  The statement of the corollary now follows from Theorem \ref{MongeTypeLightlike}.
\end{proof}
\begin{corollary}
Let $(M,g)$ be a Monge hypersurface with generator $(U,\widehat{\eta},F)$, that is, $U$ is an open submanifold of $\mathbb{R}^{n+1}_{k-1}$ and $\widehat{\eta}$ is the induced metric.  Then $(M,g)$ is lightlike iff
\begin{equation}
\nonumber
-\sum_{i=1}^{k-1}\left(\frac{\partial F}{\partial x^i}\right)^2+\sum_{j=k}^{n+1}\left(\frac{\partial F}{\partial x^j}\right)^2=1.
\end{equation}
\end{corollary}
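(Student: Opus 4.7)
The plan is to apply Theorem \ref{MongeTypeLightlike} directly, with $\widehat{M} = U$ and $\widehat{g} = \widehat{\eta}$. The only real work is to write down the gradient $\widehat{\xi} := \mbox{grad}_{\widehat{\eta}} F$ in the standard coordinates on $\mathbb{R}^{n+1}_{k-1}$ and then evaluate $\widehat{\eta}(\widehat{\xi}, \widehat{\xi})$.

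First, I would recover $\widehat{\xi}$ componentwise from the defining relation $\widehat{\eta}(\widehat{\xi}, X) = X(F)$. Testing against the coordinate fields $\partial/\partial x^a$ and using the diagonal form of $\widehat{\eta}$ with signs $\varepsilon_a = -1$ for $1 \le a \le k-1$ and $\varepsilon_a = +1$ for $k \le a \le n+1$, one reads off that the $a$th component of $\widehat{\xi}$ is $\varepsilon_a \partial F/\partial x^a$. Substituting this into $\widehat{\eta}(\widehat{\xi}, \widehat{\xi}) = \sum_a \varepsilon_a (\widehat{\xi}^a)^2$ and using $\varepsilon_a^3 = \varepsilon_a$, one obtains
\begin{equation}
\nonumber
\widehat{\eta}(\widehat{\xi}, \widehat{\xi}) = -\sum_{i=1}^{k-1}\left(\frac{\partial F}{\partial x^i}\right)^2 + \sum_{j=k}^{n+1}\left(\frac{\partial F}{\partial x^j}\right)^2.
\end{equation}

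Finally, I would invoke Theorem \ref{MongeTypeLightlike}, which asserts that $(M,g)$ is lightlike if and only if $\widehat{\eta}(\widehat{\xi}, \widehat{\xi}) = 1$. Combining this with the identity above immediately yields the stated equivalence.

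There is no real obstacle here: the corollary is essentially a coordinate specialization of the theorem. The only thing to be careful about is the correct placement of the signs $\varepsilon_a$ when converting the covector $dF$ to the vector $\widehat{\xi}$ via the indefinite metric, since getting this wrong would flip signs in the final formula.
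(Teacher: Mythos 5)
Your proposal is correct and follows essentially the same route as the paper: compute $\widehat{\xi}=\mbox{grad}_{\widehat{\eta}}F$ in the standard coordinates (with components $\varepsilon_a\,\partial F/\partial x^a$), evaluate $\widehat{\eta}(\widehat{\xi},\widehat{\xi})$, and apply Theorem \ref{MongeTypeLightlike}. The sign bookkeeping you flag is handled exactly as in the paper's own calculation.
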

\begin{proof}
Let $\widehat{\xi}$ denote the gradient of $F$ with respect to $\widehat{\eta}$.  Then
\begin{equation}
\widehat{\xi}=-\sum_{i=1}^{k-1}\frac{\partial F}{\partial x^i}\frac{\partial}{\partial x^i}+\sum_{j=k}^{n+1}\frac{\partial F}{\partial x^j}\frac{\partial}{\partial x^j}.
\end{equation}
For $v\in T_x\mathbb{R}^{n+1}\simeq \mathbb{R}^{n+1}$,
\begin{equation}
\widehat{\eta}(v,v):=-\sum_{i=1}^{k-1}(v^i)^2+\sum_{j=k}^{n+1}(v^j)^2.
\end{equation}
The corollary now follows from Theorem \ref{MongeTypeLightlike}.
\end{proof}
\noindent The following is an immediate consequence of Definition \ref{MongeType}:
\begin{lemma}
\label{LocalFrameM}
Let $(M,g)$ be an $(n+1)$-dimensional Monge-type hypersurface with generator $(\widehat{M},\widehat{g},F)$.  For $p\in \widehat{M}$ and $(U,x^i)$ a coordinate neighborhood of $p$,  the vector fields 
\begin{align}
\nonumber
e_i:=\frac{\partial F}{\partial x^i}\frac{\partial} {\partial x^0} + \frac{\partial}{ \partial x^i},~~i=1,\dots, n+1
\end{align}
make up a local frame on $M$ in a neighborhood of $(F(p),p)$.
\end{lemma}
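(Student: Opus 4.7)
The plan is to realize the vectors $e_i$ as pushforwards of coordinate vector fields on $\widehat{M}$ under the natural graph parameterization of $M$.

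First, I would introduce the map $\phi: \widehat{M}\to M$ defined by $\phi(q):=(F(q),q)$ and observe that it is a diffeomorphism onto $M$: it is smooth by smoothness of $F$, and its inverse is the restriction to $M$ of the projection $\pi: \overline{M}\to \widehat{M}$, which is also smooth. In particular, $\phi$ carries local frames on $\widehat{M}$ to local frames on $M$.

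Next, I would compute $\phi_*(\partial/\partial x^i)$ using the natural product coordinates $(x^0,x^1,\dots,x^{n+1})$ on $\overline{M}$. Since $\phi$ has components $\phi^0=F$ and $\phi^j=x^j$ for $j=1,\dots,n+1$, the chain rule yields
\[
\phi_*\frac{\partial}{\partial x^i} \;=\; \frac{\partial F}{\partial x^i}\frac{\partial}{\partial x^0}+\frac{\partial}{\partial x^i} \;=\; e_i.
\]

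Because $\{\partial/\partial x^i\}_{i=1}^{n+1}$ is a local frame on $\widehat{M}$ in a neighborhood of $p$ and $\phi$ is a diffeomorphism, the collection $\{e_i\}_{i=1}^{n+1}=\{\phi_*(\partial/\partial x^i)\}_{i=1}^{n+1}$ is a local frame on $M$ in a neighborhood of $\phi(p)=(F(p),p)$. I anticipate no serious obstacle here: the statement amounts to the elementary fact that a graph is parameterized by its base, together with a direct application of the chain rule.
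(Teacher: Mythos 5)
Your proof is correct; the paper states this lemma without proof, as an immediate consequence of Definition \ref{MongeType}, and your graph-parameterization argument (the diffeomorphism $\phi(q)=(F(q),q)$ with inverse $\pi|_M$, pushing forward the coordinate frame to obtain the $e_i$) is precisely the standard justification the paper leaves implicit. In particular, it correctly supplies the one point worth checking, namely that the $e_i$ are tangent to $M$, since they arise as pushforwards under $\phi$.
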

\begin{theorem}
\label{2ndFundamentalFormThm}
Let $(\widehat{M},\widehat{g},F)$ be a generator with ambient space $(\overline{M},\overline{g})$ and Monge-type hypersurface $(M,g)$.  In addition, let $\overline{\nabla}$ be the Levi-Civita connection on $\overline{M}$ and let $\xi$ be defined as in Corollary \ref{CorNormalVecM}.  Then the second fundamental form $B_\xi$ of $M$ satisfies
\begin{equation}
B_\xi(X,Y)=-\mbox{Hess}(F)(\pi_\ast X,\pi_\ast Y),
\end{equation}
for all $X,Y\in \Gamma(TM)$, where $\mbox{Hess}(F)$ is the Hessian of $F$ in $(\widehat{M},\widehat{g})$.
\end{theorem}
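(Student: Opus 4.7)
The plan is to exploit the product structure of $(\overline{M},\overline{g})$ together with the definition $B_\xi(X,Y):=\overline{g}(\overline{\nabla}_X Y,\xi)$. Since both sides of the claimed identity are $C^\infty(M)$-bilinear in $X,Y$, it suffices to verify the formula on the local frame $\{e_i\}$ from Lemma \ref{LocalFrameM}. Note that $\pi_\ast e_i=\partial/\partial x^i$, so the right-hand side becomes $-\mbox{Hess}(F)(\partial_i,\partial_j)$.

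The first substantive step is to record the Levi-Civita connection $\overline{\nabla}$ of the product metric $-dx^0\otimes dx^0+\pi^\ast \widehat{g}$. A standard Koszul computation (or the fact that this is a warped product with trivial warping function) gives $\overline{\nabla}_{\partial/\partial x^0}\partial/\partial x^0=0$, $\overline{\nabla}_{\partial/\partial x^0}X_L=\overline{\nabla}_{X_L}\partial/\partial x^0=0$ for any lift $X_L$ from $\widehat{M}$, and $\overline{\nabla}_{X_L}Y_L=(\widehat{\nabla}_X Y)_L$, where $\widehat{\nabla}$ is the Levi-Civita connection of $\widehat{g}$. These identities will be stated as a preliminary lemma or simply invoked.

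Next I would compute $\overline{\nabla}_{e_i}e_j$ by expanding $e_j=(\partial_j F)\partial/\partial x^0+\partial_j$ (using Remark \ref{VectorFieldLift}) and applying the rules above. Because $\partial_j F$ is constant along the $\mathbb{R}$-factor, the derivative $\partial_{x^0}(\partial_j F)$ vanishes, and the mixed terms involving $\overline{\nabla}_{\partial_{x^0}}\partial_j$ and $\overline{\nabla}_{\partial_i}\partial_{x^0}$ drop out. The result simplifies to
\begin{equation}
\nonumber
\overline{\nabla}_{e_i}e_j=\partial_i(\partial_j F)\,\tfrac{\partial}{\partial x^0}+\widehat{\nabla}_{\partial_i}\partial_j .
\end{equation}
Pairing this with $\xi=\partial/\partial x^0+\widehat{\xi}_L$ under $\overline{g}$ (where the $\partial/\partial x^0$ component picks up a minus sign) yields
\begin{equation}
\nonumber
B_\xi(e_i,e_j)=-\partial_i(\partial_j F)+\widehat{g}(\widehat{\nabla}_{\partial_i}\partial_j,\widehat{\xi}).
\end{equation}

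Finally, since $\widehat{\xi}=\mbox{grad}_{\widehat{g}}F$, the second term equals $(\widehat{\nabla}_{\partial_i}\partial_j)F$, so the right-hand side matches the classical formula $-\mbox{Hess}(F)(\partial_i,\partial_j)=-\partial_i(\partial_j F)+(\widehat{\nabla}_{\partial_i}\partial_j)F$ for the Hessian on $(\widehat{M},\widehat{g})$. Extending by bilinearity completes the proof. The main potential obstacle is purely bookkeeping: keeping the lifted vector fields distinct from their projections and confirming the product-connection identities; once these are in place the calculation is forced.
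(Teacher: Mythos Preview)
Your proposal is correct and matches the paper's proof almost line for line: both reduce to the local frame $\{e_i\}$, compute $\overline{\nabla}_{e_i}e_j=(\partial_i\partial_j F)\,\partial_{x^0}+(\widehat{\nabla}_{\partial_i}\partial_j)_L$ from the product structure, pair with $\xi$, and identify the result as $-\mbox{Hess}(F)(\partial_i,\partial_j)$ using $\widehat{g}(\cdot,\widehat{\xi})=dF(\cdot)$. The only cosmetic difference is that the paper records the metric coefficients $\overline{g}_{00}=-1$, $\overline{g}_{0i}=0$, $\overline{g}_{ij}=\widehat{g}_{ij}$ and says these imply the connection formula, whereas you state the product-connection identities directly; the content is identical.
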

\begin{proof}
Be definition,
\begin{equation}
B_\xi(X,Y):=\overline{g}(\overline{\nabla}_X Y, \xi)=-\overline{g}(Y,\overline{\nabla}_X \xi).
\end{equation}
Let $q\in M\subset \overline{M}$ and let $(U,x^i)$ be a coordinate neighborhood of $\pi(q)$ in $\widehat{M}$.  Since $B_\xi$ is $C^\infty(M)$-bilinear, it suffices to show that 
\begin{equation}
B_\xi(e_i,e_j)=-\mbox{Hess}(F)(\pi_\ast e_i,\pi_\ast e_j),
\end{equation}
where $\{e_i\}$ is the local frame on $M$ in Lemma \ref{LocalFrameM} associated with $(U,x^i)$.  Consider the coordinate system $(\mathbb{R}\times U,(x^0,x^i))$ of $q$ in $\overline{M}$.  In this coordinate system, the coefficients of $\overline{g}$ and $\widehat{g}$ are related via
\begin{align}
\label{MetricCoef1}
\overline{g}_{ij}&=\widehat{g}_{ij},~~i,j>0\\
\label{MetricCoef2}
\overline{g}_{0i}&=0,~~i>0\\
\label{MetricCoef3}
\overline{g}_{00}&=-1.
\end{align}  
Let $\widehat{\nabla}$ be the Levi-Civita connection on $(\widehat{M},\widehat{g})$.  Then (\ref{MetricCoef1})-(\ref{MetricCoef3}) implies
\begin{equation}
\overline{\nabla}_{e_i} e_j= \frac{\partial^2 F}{\partial x^i \partial x^j}\frac{\partial}{\partial x^0}+\left(\widehat{\nabla}_{\partial_i} \partial_j\right)_L,
\end{equation}
where 
\begin{equation}
\nonumber
\partial_i:=\frac{\partial}{\partial x^i}.
\end{equation}
Hence,
\begin{align}
\overline{g}\left(\overline{\nabla}_{e_i} e_j,\xi\right)&=\frac{\partial^2  F}{\partial x^i \partial x^j}\overline{g}(\partial_0,\xi)+\overline{g}\left(\left(\widehat{\nabla}_{\partial_i} \partial_j\right)_L,\xi\right)\\
&=-\frac{\partial^2  F}{\partial x^i  \partial x^j}+\widehat{g}\left(\widehat{\nabla}_{\partial_i} \partial_j,\widehat{\xi}\right)\\
&=-\frac{\partial^2  F}{\partial x^i  \partial x^j}+dF\left(\widehat{\nabla}_{\partial_i} \partial_j\right)\\
&=-\mbox{Hess}(F)(\partial_i,\partial_j)\\
&=-\mbox{Hess}(F)(\pi_\ast e_i,\pi_\ast e_j)
\end{align}
where the third to last equality follows from the definition of $\widehat{\xi}$ in Theorem \ref{MongeTypeLightlike}.  
\end{proof}
\begin{corollary}
Let $(M,g)$ be a Monge-type hypersurface with generator $(\widehat{M},\widehat{g},F)$ and ambient space $(\overline{M},\overline{g})$.  Then $M$ is totally geodesic iff $\mbox{Hess}(F)\equiv 0$ on $\widehat{M}$.
\end{corollary}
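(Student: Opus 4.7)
The plan is to combine the formula from Theorem \ref{2ndFundamentalFormThm} with the definition of totally geodesic, using the fact that the projection $\pi$ restricts to a diffeomorphism between $M$ and $\widehat{M}$. First I would observe that $(M,g)$ is totally geodesic iff $B_\xi \equiv 0$, where $\xi$ is the normal section from Corollary \ref{CorNormalVecM} (the earlier Remark ensures that this condition does not depend on the choice of $\xi$). Applying Theorem \ref{2ndFundamentalFormThm}, this translates to the condition
\[
\mbox{Hess}(F)(\pi_\ast X, \pi_\ast Y) = 0 \quad \text{for all } X,Y \in \Gamma(TM).
\]

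The key remaining step is to show that this pullback condition on $M$ is equivalent to $\mbox{Hess}(F) \equiv 0$ on $\widehat{M}$. This is immediate once one notes that $\pi|_M \colon M \to \widehat{M}$ is a diffeomorphism, with explicit inverse given by the graph map $p \mapsto (F(p),p)$; hence $\pi_\ast$ restricted to $TM$ is a fiberwise isomorphism onto $T\widehat{M}$. Alternatively, one can appeal directly to Lemma \ref{LocalFrameM}: the local frame $\{e_i\}$ on $M$ satisfies $\pi_\ast e_i = \partial_i$, producing a local frame on $\widehat{M}$, so the vanishing of $\mbox{Hess}(F)$ on pairs of pushforwards is equivalent to its vanishing on pairs of coordinate fields, hence to its vanishing outright.

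Putting these together, the two directions of the biconditional follow: the reverse implication is immediate from Theorem \ref{2ndFundamentalFormThm}, while the forward implication uses the surjectivity of $\pi_\ast|_{TM}$ to promote the vanishing of the pulled-back Hessian on $M$ to the vanishing of $\mbox{Hess}(F)$ on $\widehat{M}$. There is no substantive obstacle here; the corollary is essentially a restatement of Theorem \ref{2ndFundamentalFormThm} once one observes that $\pi|_M$ is a diffeomorphism, the only point deserving a brief mention.
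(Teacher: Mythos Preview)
Your argument is correct and matches the paper's treatment: the paper states this as an immediate corollary of Theorem \ref{2ndFundamentalFormThm} without giving a separate proof, and your write-up simply makes explicit the one point the paper leaves implicit, namely that $\pi|_M$ is a diffeomorphism onto $\widehat{M}$ so that the vanishing of $B_\xi$ is equivalent to the vanishing of $\mathrm{Hess}(F)$.
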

\noindent In terms of its generator, the following gives a necessary and sufficient condition for a Monge-type hypersurface to be totally umbilical.
\begin{theorem}
\label{HessUmbilicalForm}
Let $(M,g)$ be a Monge-type hypersurface with generator $(\widehat{M},\widehat{g},F)$ and ambient space $(\overline{M},\overline{g})$.  Then $M$ is totally umbilical in $\overline{M}$ iff, for all $p\in \widehat{M}$, there exists a neighborhood $\widehat{U}$ of $p$ and a smooth function $\widehat{\rho}\in C^\infty(\widehat{U})$ such that 
\begin{equation}
\nonumber
\mbox{Hess}(F)=\widehat{\rho} \left(dF\otimes dF-\widehat{g}\right)
\end{equation}
on $\widehat{U}$.
\end{theorem}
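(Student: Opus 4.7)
The plan is to combine Theorem \ref{2ndFundamentalFormThm}, which identifies $B_\xi$ with $-\mbox{Hess}(F)$ pulled back along $\pi_\ast$, with a direct local computation of the induced metric $g$ on $M$ in terms of $\widehat{g}$ and $dF$. Since $M$ is cut out as the graph of $F$, the projection $\pi|_M\colon M\to \widehat{M}$ is a diffeomorphism onto its image, which will let me transfer any tensorial identity on $M$ to one on $\widehat{M}$.

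First, I would compute $g$ on the local frame $\{e_i\}$ of Lemma \ref{LocalFrameM}. Using (\ref{MetricCoef1})--(\ref{MetricCoef3}) one finds at once that
\begin{equation*}
g(e_i,e_j) = \widehat{g}_{ij} - (\partial_i F)(\partial_j F),
\end{equation*}
which, together with $\pi_\ast e_i = \partial_i$ and $C^\infty(M)$-bilinearity, gives
\begin{equation*}
g(X,Y) = (\widehat{g} - dF\otimes dF)(\pi_\ast X, \pi_\ast Y)
\end{equation*}
for all $X,Y\in \Gamma(TM)$.

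Now suppose $(M,g)$ is totally umbilical on a neighborhood of a point, so that $B_\xi = \rho g$ there for some smooth $\rho$. Substituting Theorem \ref{2ndFundamentalFormThm} and the expression just derived gives
\begin{equation*}
-\mbox{Hess}(F)(\pi_\ast X,\pi_\ast Y) = \rho\,(\widehat{g}-dF\otimes dF)(\pi_\ast X,\pi_\ast Y)
\end{equation*}
for all $X,Y\in \Gamma(TM)$. Because $\pi|_M$ is a diffeomorphism onto some open $\widehat{U}\subset \widehat{M}$, the vectors $\pi_\ast X,\pi_\ast Y$ range over all of $T\widehat{U}$, so setting $\widehat{\rho}:=\rho\circ (\pi|_M)^{-1}$ yields
\begin{equation*}
\mbox{Hess}(F) = \widehat{\rho}\,(dF\otimes dF-\widehat{g})
\end{equation*}
on $\widehat{U}$. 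The converse is obtained by reading the same chain of equalities in reverse, using again that $\pi|_M$ is a diffeomorphism to interpret a $\widehat{U}$-indexed $\widehat{\rho}$ as a local function $\rho$ on $M$.

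I do not anticipate a serious obstacle. The two points that need care are the sign (the minus in Theorem \ref{2ndFundamentalFormThm} is exactly what flips $\widehat{g}-dF\otimes dF$ into $dF\otimes dF-\widehat{g}$), and the use of the diffeomorphism $\pi|_M$ to go back and forth between a tensor identity on $M$ and one on $\widehat{M}$ without loss of information.
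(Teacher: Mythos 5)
Your argument is correct and is essentially the paper's own proof: both reduce the umbilical condition via Theorem \ref{2ndFundamentalFormThm} to an identity on the frame $\{e_i\}$ of Lemma \ref{LocalFrameM}, compute $g(e_i,e_j)=\widehat{g}_{ij}-(\partial_i F)(\partial_j F)$ from the product structure of $\overline{g}$, and transfer the resulting tensor identity to $\widehat{M}$ through the projection, with the sign in Theorem \ref{2ndFundamentalFormThm} producing $dF\otimes dF-\widehat{g}$. No gaps to report.
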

\begin{proof}
Let $\xi$ be defined as in Corollary \ref{CorNormalVecM}, let $q\in M$ be any point, let $(U,x^i)$ be a coordinate neighborhood of $\pi(q)$ in $\widehat{M}$, where $\pi$ is the projection map from $\overline{M}$ to $\widehat{M}$, and let  $\{e_i\}$ be the local frame on $M$ from Lemma \ref{LocalFrameM} associated with $(U,x^i)$.  From Theorem \ref{2ndFundamentalFormThm}, the condition that $M$ be totally umbilical is equivalent to
\begin{equation}
\label{HessTotallyUmbilical1}
\mbox{Hess}(F)(\pi_\ast e_i, \pi_\ast e_j) =-\rho g(e_i,e_j)
\end{equation}
for some smooth function $\rho$ defined on a neighborhood 
\begin{equation}
V\subset \pi^{-1}(U)\cap M
\end{equation}
of $q$ in $M$.  Let $\widehat{\rho}$ be the smooth function on the open set $\pi(V)\subset \widehat{M}$ defined by
\begin{equation}
\rho=\widehat{\rho}\circ \pi.
\end{equation}
Expanding the right side of (\ref{HessTotallyUmbilical1}) gives 
\begin{align}
-\rho g(e_i,e_j)&= -\rho \overline{g}(e_i,e_j)\\
\label{MMhatRelation}
-\rho g(e_i,e_j)&=-\widehat{\rho}\left(-\frac{\partial F}{\partial x^i}\frac{\partial F}{\partial x^j}+\widehat{g}(\partial_i,\partial_j)\right)\\
\label{MMhatRelation}
-\rho g(e_i,e_j)&=\widehat{\rho}\left(\frac{\partial F}{\partial x^i}\frac{\partial F}{\partial x^j}-\widehat{g}(\partial_i,\partial_j)\right)
\end{align}
where it is understood that if the left side of (\ref{MMhatRelation}) is evaluated at $p\in V$, the right side is evaluated at $\pi(p)$.  Since $\pi_\ast e_i=\partial_i$ and 
\begin{equation}
dF(\partial_i)= \frac{\partial F}{\partial x^i},
\end{equation}
we have
\begin{equation}
\mbox{Hess}(F)=\widehat{\rho}\left(dF\otimes dF -\widehat{g}\right)
\end{equation}
on $\pi(V)$.  This completes the proof.
 \end{proof}

\begin{corollary}
\label{MongeHyperUmbilical}
Let $M$ be a Monge hypersurface of $\mathbb{R}^{n+2}_k$ with generator $(\widehat{U},\widehat{\eta},F)$.  Then $M$ is totally umbilical iff  there exists a smooth function $\widehat{\rho}$ on $\widehat{U}$ such that
\begin{equation}
\label{umbilical1}
\frac{\partial^2 F}{\partial x^i\partial x^j} =\widehat{\rho}\left(\frac{\partial F}{\partial x^i}\frac{\partial F}{\partial x^j}-\widehat{\eta}_{ij}\right),~1\le i,j\le n+1.
\end{equation}
\end{corollary}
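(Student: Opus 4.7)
The plan is to derive the corollary as a direct coordinate translation of Theorem \ref{HessUmbilicalForm}, using the fact that the ambient generator is now an open submanifold of semi-Euclidean space, where computations simplify dramatically.

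First, I would invoke Theorem \ref{HessUmbilicalForm} in the $(\Longrightarrow)$ direction: totally umbilical implies that near each $p\in \widehat{U}$, there is a smooth function (call it $\widehat{\rho}_p$) on a neighborhood with $\mbox{Hess}(F) = \widehat{\rho}_p(dF\otimes dF - \widehat{\eta})$. The next step is to read this equation off in the global Cartesian chart $(x^1,\ldots,x^{n+1})$ that $\widehat{U}$ inherits from $\mathbb{R}^{n+1}_{k-1}$. Because the Christoffel symbols of $\widehat{\eta}$ vanish identically in these coordinates, the Hessian coefficients reduce to ordinary second partials, $\mbox{Hess}(F)(\partial_i,\partial_j) = \partial^2 F/\partial x^i\partial x^j$. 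Similarly $dF\otimes dF(\partial_i,\partial_j) = (\partial_i F)(\partial_j F)$ and $\widehat{\eta}(\partial_i,\partial_j) = \widehat{\eta}_{ij}$, so the tensor identity becomes exactly (\ref{umbilical1}) with $\widehat{\rho}_p$ in place of $\widehat{\rho}$.

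To upgrade the local functions $\widehat{\rho}_p$ to a single smooth $\widehat{\rho}$ on all of $\widehat{U}$, I would argue that on overlaps the tensor equation $\mbox{Hess}(F) = \widehat{\rho}(dF\otimes dF - \widehat{\eta})$ pins down $\widehat{\rho}$ uniquely wherever the right-hand tensor is nonzero, so the $\widehat{\rho}_p$ agree on overlaps there and extend consistently by continuity. On any open set where $dF\otimes dF - \widehat{\eta} \equiv 0$ we must also have $\mbox{Hess}(F) \equiv 0$, so any value of $\widehat{\rho}$ works and the local pieces still glue (the simplest convention being $\widehat{\rho}=0$ there). Thus the locally defined $\widehat{\rho}_p$ determine a globally smooth $\widehat{\rho}\in C^\infty(\widehat{U})$ satisfying (\ref{umbilical1}) everywhere.

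The $(\Longleftarrow)$ direction is immediate: given a global $\widehat{\rho}$ satisfying (\ref{umbilical1}) on $\widehat{U}$, the same coordinate computation shows $\mbox{Hess}(F) = \widehat{\rho}(dF\otimes dF - \widehat{\eta})$ as tensors, and Theorem \ref{HessUmbilicalForm} yields total umbilicity of $M$. I expect no real obstacle here — the only mildly delicate point is the global patching of $\widehat{\rho}$, and that is handled by the uniqueness remark above combined with the fact that semi-Euclidean coordinates cover $\widehat{U}$ in a single chart, so no coordinate-change bookkeeping is needed.
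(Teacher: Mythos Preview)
Your argument is correct and follows the paper's intended route: the corollary is stated there without proof as the direct coordinate specialization of Theorem~\ref{HessUmbilicalForm}, using that the Christoffel symbols of $\widehat{\eta}$ vanish in Cartesian coordinates. The local-to-global step for $\widehat{\rho}$ that you worry about is simpler than you make it: since $dF\otimes dF$ has rank at most $1$ while $\widehat{\eta}$ has rank $n+1$, the tensor $dF\otimes dF-\widehat{\eta}$ never vanishes (for $n\ge 1$), so $\widehat{\rho}$ is pointwise uniquely determined and the local pieces glue automatically---or, more directly, $\pi|_M:M\to\widehat{U}$ is a diffeomorphism (graph of $F$), so the global $\rho$ from the definition of total umbilicity transports to a global $\widehat{\rho}$ on $\widehat{U}$ without any patching.
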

\begin{theorem}
\label{MongeTypeCanonicalScreen}
Every degenerate Monge-type hypersurface has a canonical screen distribution, which is integrable.  If $(\widehat{M},\widehat{g},F)$ is a lightlike generator with ambient space $(\overline{M},\overline{g})$, the lightlike transversal line bundle associated with the canonical screen is spanned by
\begin{equation}
\nonumber
N_\xi=-\frac{1}{2}\left(\frac{\partial}{\partial x^0}-\widehat{\xi}_L\right),
\end{equation}
where $\widehat{\xi}:=\mbox{grad}_{\widehat{g}}F$.  In addition, the vector field $N_\xi$ satisfies $\overline{g}(\xi,N_\xi)=1$.
\end{theorem}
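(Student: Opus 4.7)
The plan is to exhibit a canonical screen as $\ker(dx^0|_{TM})$, verify integrability using that $dx^0$ is closed, and then identify the transversal bundle by checking the three uniqueness conditions of Theorem \ref{ExistenceTheorem} for the prescribed $N_\xi$.

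First I would set
\[
S(TM):=\ker\bigl(dx^0|_{TM}\bigr).
\]
Since $\xi=\partial/\partial x^0+\widehat{\xi}_L$ is tangent to $M$ with $dx^0(\xi)=1$, the restriction $dx^0|_{TM}$ is fiberwise nonzero, so $S(TM)$ is a smooth rank-$n$ subbundle of $TM$. Every $v\in T_qM$ decomposes uniquely as $v=\bigl(v-dx^0(v)\,\xi\bigr)+dx^0(v)\,\xi$, yielding the screen decomposition $TM=S(TM)\oplus TM^\perp$. Canonicity is manifest because $dx^0$ is intrinsic to the product structure $\overline{M}=\mathbb{R}\times\widehat{M}$ and no auxiliary data enter its definition.

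Next I would prove integrability via Frobenius. Because $dx^0$ is exact it is closed, and for any $X,Y\in\Gamma(S(TM))$ Cartan's formula yields
\[
0=d(dx^0)(X,Y)=X(dx^0(Y))-Y(dx^0(X))-dx^0([X,Y])=-dx^0([X,Y]),
\]
so $[X,Y]\in\Gamma(S(TM))$.

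Finally, to identify the transversal bundle I would verify the three conditions of Theorem \ref{ExistenceTheorem} for
\[
N_\xi:=-\tfrac{1}{2}\bigl(\partial/\partial x^0-\widehat{\xi}_L\bigr).
\]
Using $\overline{g}(\partial/\partial x^0,\partial/\partial x^0)=-1$, $\overline{g}(\partial/\partial x^0,\widehat{\xi}_L)=0$, and $\widehat{g}(\widehat{\xi},\widehat{\xi})=1$ from Theorem \ref{MongeTypeLightlike}, a direct expansion gives $\overline{g}(\xi,N_\xi)=1$ and $\overline{g}(N_\xi,N_\xi)=0$, handling (a), (b) and the last assertion of the theorem. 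For (c), any $W\in\Gamma(S(TM))$ satisfies $dx^0(W)=0$, so $W=w_L$ for some $w$ in $T\widehat{M}$; tangency of $W$ to $M$ further forces $dF(w)=\widehat{g}(\widehat{\xi},w)=0$, whence $\overline{g}(W,N_\xi)=\tfrac{1}{2}\widehat{g}(w,\widehat{\xi})=0$. The uniqueness clause of Theorem \ref{ExistenceTheorem} then guarantees that $N_\xi$ spans $\mathrm{tr}(TM)$. The only step that might look subtle is the canonicity claim, but it is justified by the observation that the entire construction depends only on the generator $(\widehat{M},\widehat{g},F)$ and the distinguished coordinate $x^0$ on the $\mathbb{R}$-factor of $\overline{M}$; the rest of the argument is essentially bookkeeping.
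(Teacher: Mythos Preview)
Your proof is correct and essentially the same as the paper's: the paper defines the canonical screen as $(\mathcal{L}_\xi\oplus\mathcal{L}_V)^\perp$ with $V=-\partial/\partial x^0$, which coincides with your $\ker(dx^0|_{TM})$ since $\overline{g}(\,\cdot\,,V)=dx^0(\,\cdot\,)$, and proves integrability by the same observation $[X,Y]x^0=0$. The only cosmetic difference is that the paper obtains $N_\xi$ by plugging $V$ into the general formula~(\ref{transversal1}), whereas you verify the characterizing conditions of Theorem~\ref{ExistenceTheorem} directly.
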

\begin{proof}
Let $(M,g)$ be a degenerate Monge-type hypersurface with generator $(\widehat{M},\widehat{g},F)$ and ambient space $(\overline{M},\overline{g})$.  Let
\begin{align}
V:=-\frac{\partial}{\partial x^0}\in \Gamma (T\overline{M})
\end{align}
and let $\xi$ be defined as in Corollary \ref{CorNormalVecM}.  The canonical screen distribution is then defined by setting
\begin{equation}
S(TM)=(\mathcal{L}_\xi\oplus \mathcal{L}_V)^\perp,
\end{equation}
where $\mathcal{L}_\xi$ and $\mathcal{L}_V$ are the line bundles over $M$ with sections $\xi|_M$ and $V|_M$ respectively.  Since
\begin{equation}
TM=\mathcal{L}_\xi^\perp,
\end{equation}
it follows that $S(TM)\subset TM$.  In addition, since 
\begin{equation}
\overline{g}(\xi,V)=1,
\end{equation}
it follows that $\overline{g}$ is non-degenerate on $S(TM)$.  Hence, $\xi_p\notin S(TM)_p$ for all $p\in M$.  This implies 
\begin{equation}
TM=S(TM)\oplus TM^\perp.
\end{equation}
From (\ref{transversal1}), the lightlike transversal line bundle $\mbox{tr}(TM)$ associated with $S(TM)$ is spanned by the vector field
\begin{equation}
N_\xi=-\frac{1}{2}\left(\frac{\partial}{\partial x^0}-\widehat{\xi}_L\right).
\end{equation} 
The fact that $\overline{g}(\xi,N_\xi)=1$ is a consequence of Theorem \ref{ExistenceTheorem}.

To show that $S(TM)$ is integrable, notice from the definition of $V$ and $\overline{g}$ that 
\begin{equation}
\overline{g}(W,V)=0\Longleftrightarrow Wx^0=0.
\end{equation}
for all $W\in \Gamma(T\overline{M})$.  Let $X,Y\in \Gamma (S(TM))$.   From the definition of $S(TM)$, it follows that
\begin{equation}
[X,Y]x^0=X(Yx^0)-Y(Xx^0)= X(0)-Y(0)=0.
\end{equation}
Hence $[X,Y]\in \mathcal{L}_V^\perp$.  Lastly, since $X,Y$ are vector fields on $M$, so is $[X,Y]$.  Hence, $[X,Y]\in \mathcal{L}_\xi^\perp$.  This completes the proof.
\end{proof}
\noindent Regarding minimal Monge-type hypersurfaces, we have the following result:
\begin{theorem}
Let $(\widehat{M},\widehat{g}, F)$ be a degenerate Monge-type generator.  The associated Monge-type hypersurface $(M,g)$ is minimal iff for all $\widehat{p}\in \widehat{M}$, there exists a neighborhood $\widehat{U}$ of $\widehat{p}$ and an orthonormal frame $\{\widehat{E}_i\}$ of the kernel of $dF|_{\widehat{U}}$ such that
\begin{equation}
\sum_{i=1}^n \varepsilon_i\mbox{Hess}(F)(\widehat{E}_i,\widehat{E}_i)=0,
\end{equation}
where $\varepsilon_i=\widehat{g}(\widehat{E}_i,\widehat{E}_i)\in \{-1,1\}$ and $n:=\dim\widehat{M}-1$.
\end{theorem}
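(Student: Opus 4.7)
The plan is to leverage the canonical screen distribution constructed in Theorem \ref{MongeTypeCanonicalScreen} together with the Hessian formula from Theorem \ref{2ndFundamentalFormThm}. By the remark following the definition of minimality, the condition $\sum_i\varepsilon_i B_\xi(E_i,E_i)=0$ is independent of both the screen distribution and the orthonormal frame chosen for it; so I am free to work with the canonical screen and with any convenient orthonormal frame of it.

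The first real step is to describe the canonical screen $S(TM)$ explicitly in terms of data on $\widehat M$. By construction $S(TM)=(\mathcal L_\xi\oplus \mathcal L_V)^\perp$ with $V=-\partial/\partial x^0$ and $\xi=\partial/\partial x^0+\widehat\xi_L$. Writing a tangent vector along $M$ as $W=a\,\partial/\partial x^0+W'_L$, the condition $\overline g(W,V)=0$ forces $a=0$, and then $\overline g(W,\xi)=\widehat g(W',\widehat\xi)=dF(W')$. Hence $S(TM)$ is exactly the bundle of lifts of $\ker dF\subset T\widehat M$.

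Next, I note that $\ker dF=\widehat\xi^\perp$ is a non-degenerate rank-$n$ subbundle of $T\widehat M$, since $\widehat g(\widehat\xi,\widehat\xi)=1$ by Theorem \ref{MongeTypeLightlike}; in particular it admits local orthonormal frames $\{\widehat E_i\}$ around any $\widehat p\in\widehat M$. Lifting preserves the inner product in the sense $\overline g((\widehat E_i)_L,(\widehat E_j)_L)=\widehat g(\widehat E_i,\widehat E_j)$, so $E_i:=(\widehat E_i)_L$ is a local orthonormal frame of $S(TM)$ with the same signatures $\varepsilon_i=\widehat g(\widehat E_i,\widehat E_i)$.

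Finally, since $\pi_\ast E_i=\widehat E_i$, Theorem \ref{2ndFundamentalFormThm} gives
\begin{equation*}
B_\xi(E_i,E_i)=-\mbox{Hess}(F)(\widehat E_i,\widehat E_i),
\end{equation*}
so $\sum_i \varepsilon_i B_\xi(E_i,E_i)=0$ is equivalent to $\sum_i\varepsilon_i\,\mbox{Hess}(F)(\widehat E_i,\widehat E_i)=0$. Reading this equivalence in each direction yields the theorem, using in the forward direction the pointwise existence of local orthonormal frames of $\ker dF$ and the fact that minimality at the point $(F(\widehat p),\widehat p)\in M$ is independent of frame choice. The only step requiring any insight is the explicit identification of the canonical screen as the lift of $\ker dF$; all other pieces amount to bookkeeping through Theorems \ref{2ndFundamentalFormThm} and \ref{MongeTypeCanonicalScreen}.
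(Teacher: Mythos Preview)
Your argument is correct and follows essentially the same route as the paper: use the canonical screen from Theorem \ref{MongeTypeCanonicalScreen}, identify its sections with lifts of vectors in $\ker dF\subset T\widehat M$, and then invoke Theorem \ref{2ndFundamentalFormThm} to convert $B_\xi$ into $\mbox{Hess}(F)$. The only stylistic difference is that the paper, in the forward direction, begins with an arbitrary orthonormal frame $\{E_i\}$ of $S(TM)$ and shows via a coordinate computation that each $E_i$ is the lift of some $\widehat E_i\in\ker dF$, whereas you first establish the clean identification $S(TM)=(\ker dF)_L$ and then simply lift a chosen orthonormal frame of $\ker dF$, appealing to frame-independence of minimality; these are equivalent and your formulation is a touch more direct.
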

\begin{proof}
Let $S(TM)$ be the canonical screen distribution on $(M,g)$ and let $(\overline{M},\overline{g})$ be the ambient space associated with $(\widehat{M},\widehat{g}, F)$.  In addition, let $\xi$ be the null vector field tangent to $M$ given by Corollary \ref{CorNormalVecM}.  

Suppose that $(M,g)$ is minimal.  By definition, this means that for all $p\in M$, there exists a neighborhood $U$ of $p$ and an orthonormal frame $\{E_i\}_{i=1}^n$ of $S(TM)|_U$ such that 
\begin{equation}
\sum_{i=1}^n \varepsilon_i B_\xi(E_i,E_i)=0,
\end{equation}
where $\varepsilon_i=g(E_i,E_i)$.  Let $\pi: \overline{M}\rightarrow \widehat{M}$ be the projection map.  If necessary, shrink $U$ so that the open set $\widehat{U}:=\pi(U)$ is covered by some coordinate system $(x^j)_{j=1}^{n+1}$.  Let $\{e_j\}_{j=1}^{n+1}$ be the local frame on $M$ associated with $(\widehat{U},x^j)$  (see Lemma \ref{LocalFrameM}).  Then
\begin{equation}
E_i = \sum_{j=1}^{n+1} \alpha^j_i e_j = \left(\sum_{j=1}^{n+1}\alpha^j_i\frac{\partial F}{\partial x^j}\right)\frac{\partial}{\partial x^0}+\sum_{j=1}^{n+1}\alpha^j_i \frac{\partial}{\partial x^j},\hspace*{0.1in} i=1,\dots, n
\end{equation}
for some smooth functions $\alpha^j_i$, $j=1,\dots, n+1$, $i=1,\dots, n$.  Since $E_i$ is a section of $S(TM)|_U$, we have $\overline{g}(E_i,\partial_0)=0$, which is equivalent to 
\begin{equation}
\label{kernel_dF}
\sum_{j=1}^{n+1}\alpha^j_i\frac{\partial F}{\partial x^j}=0.
\end{equation}
Set 
\begin{equation}
\widehat{E}_i = \sum_{j=1}^{n+1}\alpha^j_i \frac{\partial}{\partial x^j},\hspace*{0.1in} i=1,\dots, n
\end{equation}
where $\widehat{E}_i$ is regarded as a vector field on $\widehat{U}$.  As a consequence of (\ref{kernel_dF}), we have
\begin{equation}
\label{Ei_Lift}
E_i=(\widehat{E}_i)_L,
\end{equation}
where $(\widehat{E}_i)_L$ is the unique lift of $\widehat{E}_i$ to $U$.  In addition, notice that (\ref{kernel_dF}) is equivalent to
\begin{equation}
\label{kernel_dF2}
dF(\widehat{E}_i)=0,\hspace*{0.1in} i=1,\dots, n.
\end{equation}
Moreover,
\begin{equation}
g(E_i,E_j)=\widehat{g}(\widehat{E}_i,\widehat{E}_j)=\varepsilon_j \delta_{ij},
\end{equation}
where $\delta_{ij}=1$ if $i=j$ and zero otherwise.  Equation (\ref{kernel_dF2}) shows that $\widehat{E}_i$ belongs to the kernel of $dF$ on $\widehat{U}$.  By Theorem \ref{2ndFundamentalFormThm}, we have
\begin{align}
\nonumber
\sum_{i=1}^n \varepsilon_i \mbox{Hess}(F)(\widehat{E}_i,\widehat{E}_i)&=\sum_{i=1}^n \varepsilon_i \mbox{Hess}(F)(\pi_\ast (\widehat{E}_i)_L,\pi_\ast(\widehat{E}_i)_L)\\
\nonumber
&=-\sum_{i=1}^n \varepsilon_i B_\xi((\widehat{E}_i)_L,(\widehat{E}_i)_L)\\
\nonumber
&=-\sum_{i=1}^n \varepsilon_i B_\xi(E_i,E_i)\\
\nonumber
&=0.
\end{align}

For the converse, we use (\ref{Ei_Lift}) to define $E_i$ in terms of $\widehat{E_i}$.  This immediately guarantees that the $E_i$ are orthonormal and satisfy
\begin{align}
\nonumber
\overline{g}(E_i,\partial_0)&=0\\
\nonumber
\overline{g}(E_i,\xi)&=0,
\end{align}
where the last equality follows from the fact that the $\widehat{E_i}$ belong to the kernel of $dF$.  Hence, $\{E_i\}_{i=1}^n$ is a local orthonormal frame on $S(TM)$.  Running the above calculation in reverse shows that $(M,g)$ is minimal.  This completes the proof.
\end{proof}

\section{Examples}
\noindent In this section, two basic examples of degenerate Monge-type hypersurfaces are presented; both examples are totally umbilical. 
\begin{example}
Let  $(\widehat{M},\widehat{g},F)$ be the generator defined by
\begin{align}
\nonumber
\widehat{M}&:= \{x\in \mathbb{R}^{n+1}~|~x^{n+1}>0\}\\
\nonumber
\widehat{g}&:= \frac{1}{(x^{n+1})^2}(dx^1\otimes dx^1+\cdots + dx^{n+1}\otimes dx^{n+1})\\
\nonumber
F&:=\ln (x^{n+1}).
\end{align}
In other words, $(\widehat{M},\widehat{g})$ is $(n+1)$-dimensional hyperbolic space.  Let 
\begin{equation}
\widehat{\xi}:=\mbox{grad}_{\widehat{g}}F=x^{n+1}\frac{\partial}{\partial x^{n+1}}.
\end{equation}
Since $\widehat{g}(\widehat{\xi},\widehat{\xi})=1$, it follows from Theorem \ref{MongeTypeLightlike} that the associated Monge-type hypersurface is lightlike.   A direct verification shows that
\begin{equation}
\nonumber
\mbox{Hess}(F)(\partial_i,\partial_j)=
\left\{\begin{array}{cl}
-\frac{1}{(x^{n+1})^2} & i=j < n+1\\
0 & i=j= n+1\\
0 & i\neq j
\end{array}\right.
\end{equation}
It follows easily from this that
\begin{equation}
\nonumber
\mbox{Hess}(F)=dF\otimes dF-\widehat{g}.
\end{equation}
Hence, by Theorem \ref{HessUmbilicalForm}, the associated Monge-type hypersurface $(M,g)$ is totally umbilical.  The null vector field $\xi$ tangent to $M$ (see Corollary \ref{CorNormalVecM}) is
\begin{equation}
\xi=\frac{\partial}{\partial x^0} + x^{n+1}\frac{\partial}{\partial x^{n+1}}.
\end{equation} 
Let $\mbox{tr}(TM)$ denote the lightlike transversal bundle associated with the canonical screen (see Theorem \ref{MongeTypeCanonicalScreen}), and let $N_\xi$ denote the unique section of $\mbox{tr}(TM)$ associated with $\xi$ .  The induced linear connection $\nabla$ on $(M,g)$ is then given by 
\begin{equation}
\overline{\nabla}_X Y = \nabla_X Y + g(X,Y)N_\xi
\end{equation}
for all $X,Y\in \Gamma (TM)$, where $\overline{\nabla}$ is the Levi-Civita connection on the ambient space $(\overline{M}, \overline{g})$.  (Note that the second fundamental form $B_\xi$ is precisely $g$ in this example.)
\end{example}

\begin{example}
Let $(\widehat{M},\widehat{g},F)$ be the generator\footnote{The Lorentz manifold $(\widehat{M},\widehat{g})$ is actually a 2-dimensional submanifold of the Schwartzchild spacetime (see pp. 149 \cite{HE}).} defined by 
\begin{align}
\nonumber
\widehat{M}&:=\{(t,r)~|~-\infty < t < \infty,~~r>R\}\\
\nonumber
\widehat{g}&:=-\left(1-\frac{R}{r}\right)dt\otimes dt+\left(1-\frac{R}{r}\right)^{-1} dr\otimes dr\\
\nonumber
F&:=\sqrt{r}\sqrt{r-R}+R\ln\left(\sqrt{r}+\sqrt{r-R}\right),
\end{align}
where $R>0$ is a constant.    A direct verification shows
\begin{equation}
\widehat{\xi}:=\mbox{grad}_{\widehat{g}}F=\sqrt{\frac{r-R}{r}}\frac{\partial}{\partial r}.
\end{equation}
This shows that $\widehat{g}(\widehat{\xi},\widehat{\xi})=1$.  By Theorem \ref{MongeTypeLightlike}, the associated Monge-type hypersurface $(M,g)$ is lightlike.  For the Hessian of $F$, we have
\begin{equation}
\nonumber
\mbox{Hess}(F)(\partial_i,\partial_j)=
\left\{\begin{array}{cl}
-\frac{R\sqrt{r-R}}{2r^{5/2}} & i=j=t \\
0 & \mbox{otherwise}
\end{array}\right.
\end{equation}
From this, we have
\begin{equation}
\nonumber
\mbox{Hess}(F)=\rho(dF\otimes dF - \widehat{g}),
\end{equation}
where 
\begin{equation}
\rho= -\frac{R}{2r^{3/2}\sqrt{r-R}}.
\end{equation}
Hence, $(M,g)$ is totally umbilical (in the ambient space $(\overline{M},\overline{g})$) by Theorem \ref{HessUmbilicalForm}.  

The null vector field $\xi$ tangent to $M$ (see Corollary \ref{CorNormalVecM}) is
\begin{equation}
\nonumber
\xi:=\frac{\partial}{\partial x^0} + \sqrt{\frac{r-R}{r}}\frac{\partial}{\partial r}.
\end{equation}
Let $\mbox{tr}(TM)$ be the lightlike transversal bundle associated with the canonical screen on $M$ (see Theorem \ref{MongeTypeCanonicalScreen}).  The unique section of $\mbox{tr}(TM)$ associated with $\xi$ is then
\begin{equation}
\nonumber
N_\xi =-\frac{1}{2}\left(\frac{\partial}{\partial x^0}- \sqrt{\frac{r-R}{r}}\frac{\partial}{\partial r}\right).
\end{equation}
Using the above information, the induced linear connection on $(M,g)$ is then given by
\begin{equation}
\overline{\nabla}_{X} Y = \nabla_{X} Y + \rho g(X,Y)N_\xi,\hspace*{0.1in}\forall~X,Y\in \Gamma(TM).
\end{equation}
\end{example}

\end{document}